\begin{document}
 \def\pd#1#2{\frac{\partial#1}{\partial#2}}
\def\dfrac{\displaystyle\frac}
\let\oldsection\section
\renewcommand\section{\setcounter{equation}{0}\oldsection}
\renewcommand\thesection{\arabic{section}}
\renewcommand\theequation{\thesection.\arabic{equation}}

\newtheorem{thm}{Theorem}[section]
\newtheorem{cor}[thm]{Corollary}
\newtheorem{lem}[thm]{Lemma}
\newtheorem{prop}[thm]{Proposition}
\newtheorem*{con}{Conjucture}
\newtheorem*{questionA}{Question}
\newtheorem*{thmA}{Theorem A}
\newtheorem*{thmB}{Theorem B}
\newtheorem{remark}{Remark}[section]
\newtheorem{definition}{Definition}[section]

\title{Effects of nonlocal dispersal strategies and heterogeneous environment on total population
\thanks{The first author is supported by Shaanxi NSF (No. S2017-ZRJJ-MS-0104) and the Fundamental Research Funds for the Central Universities (No. 310201911cx014).
The second author is  supported by  NSF  of China (No. 11971498), NSF of Guangdong Province (No. 2019A1515011339) and the Fundamental Research Funds for the Central Universities. } }

\author{Xueli Bai\\ {School of Mathematics and Statistics, Northwestern Polytechnical
University,}\\{\small 127 Youyi Road(West), Beilin 710072,
Xi'an, P. R. China.}\\
Fang Li{\thanks{Corresponding author.
E-mail: lifang55@mail.sysu.edu.cn}}\\ {School of Mathematics, Sun Yat-sen
University,}\\{\small No. 135, Xingang Xi Road, Guangzhou 510275, P. R. China.}  }

\date{}
\maketitle{}

\begin{abstract}
In this paper,  we consider the following single species model with nonlocal dispersal strategy
$$
d\mathcal {L} [\theta] (x,t)    +  \theta(x,t)   [m(x)-  \theta(x,t)]=0 ,
$$
where $\mathcal {L}$ denotes the nonlocal diffusion operator, and investigate how  the dispersal rate of the species  and the distribution of resources affect the total population.  First, we show that the upper bound for the ratio between  total population and total resource is $C\sqrt{d}$. Moreover, examples are constructed to indicate that  this upper bound is optimal.  Secondly, for a type of simplified nonlocal diffusion operator, we prove that if
$ \frac{\sup m}{\inf m}<\frac{\sqrt{5}+1}{2}$,  the total population as a function of dispersal rate $d$ admits exactly one local maximum point in  $\displaystyle  (\inf m, \sup m)$.
These results reveal essential discrepancies  between local and nonlocal dispersal strategies.
\end{abstract}

{\bf Keywords} total population, nonlocal dispersal, heterogeneity
\vskip3mm {\bf MSC (2010)}: 35K57, 92D25, 45K05

\section{Introduction}
Total population is an important indicator for  persistence of species. If the quantity is at low level, the risk of extinction will increase, while if the quantity is at high level, it will lead to  shortage of resources and intense pressure of competition, which may japodize the existing stability of the multi-species systems \cite{LouCN}. {\it An interesting problem in spatial ecology is how  the dispersal rate of the species  and the distribution of resources affect the total population.}
The main purpose of this paper  is to investigate this problem  for species adopting nonlocal dispersal strategies.

Our study  is motivated by a series of intriguing questions and work related to total equilibrium population in  a single logistic equation with random diffusion as follows
\begin{equation}\label{general single}
\begin{cases}
u_t =   d\Delta u    +   u    [m(x)-  u]  & x\in \Omega,\ t>0,\\
\frac{\partial u}{\partial \nu} =0  & x\in \partial\Omega,\ t>0,
\end{cases}
\end{equation}
where $u$ represents the population density of a species at location $x\in\Omega$  and at time $t>0$, $d$ is the  dispersal rate of the species which is assumed to be a positive constant, the habitat $\Omega$ is a  bounded domain in $\mathbb R^n$ and $\nu$ denotes the unit outward normal vector. The function $m(x)$ is the intrinsic growth rate or carrying capacity, which reflects the environmental influence on the species $u$. Unless designated otherwise, we assume that $m(x)$ satisfies the following condition:

\medskip
\noindent\textbf{(M)} \hspace{0.5cm}  $ m(x)\in L^{\infty}(\Omega),\ m(x)\geq 0 \ \textrm{and}\  m\not\equiv \textrm{const} \ \textrm{on} \  \bar\Omega.$

\medskip

It is known that if $m$ satisfies the assumption \textbf{(M)}, then for every $d>0$, the problem (\ref{general single}) admits a unique positive steady state, denoted by $\theta_{d,m}(x)$, which is globally asymptotically stable (see e.g. \cite{CC-book}). In addition, a remarkable property concerning $\theta_{d,m}(x)$ was first observed in \cite{Lou06}
\begin{equation}\label{observation}
\int_{\Omega}\theta_{d,m}(x)\,dx > \int_{\Omega}m(x)\,dx \quad\text{ for all }d > 0.
\end{equation}
Biologically, this  indicates that when coupled with diffusion, a heterogeneous environment can support a total population  larger than the  total carrying capacity of the environment, which is quite different from homogeneous environment.  Simply speaking, heterogeneity  of resources can benefit species. This theory is  further  confirmed experimentally \cite{ZDN}.

Now, define
\begin{align}\label{eqn: def of E(m)}
E(m) := \sup_{d>0}\frac{\int_{\Omega}\theta_{d,m}\,dx}{\int_{\Omega}m\,dx}.
\end{align}
The ratio $E(m)$ is a key quantity  in characterizing  global dynamics of  two-species Lotka-Volterra competition systems \cite{HN}. 
According to the observation (\ref{observation}), $E(m)>1$ for any $m$ satisfying condition (\textbf{M}). The following question was initially proposed by W.-M. Ni:
\medskip

\noindent\textbf{Question.} Is $E(m)$ bounded above independent of $m$? If so, what is the optimal bound?

\medskip
\noindent This biological question leads us to understanding how to maximize the total population  under the limited total resources by redistributing the  resources, and what would ``optimal'' distribution be, if it exists. In the one-dimensional case, i.e., when $n=1$ and $\Omega$ is an open interval, W.-M. Ni conjectured that the supremum of $E(m)$ over all $m$'s satisfying condition (\textbf{M}) is $3$. This  conjecture is confirmed in \cite{BHL}. However, for higher dimensional case, i.e., $n\geq 2$, it is proved in \cite{Jumpei} that the supremum of $E(m)$  is unbounded. Some further studies related to this question can be found in \cite{MNP, Mazari2021}. This question is also investigated in patchy environment \cite{NLouY2021}.

Another interesting issue is the dependence of the total population on its dispersal rate.
According to \cite[Theorem 1.1]{Lou06}, as either $d\rightarrow0^+$ or $d\rightarrow +\infty$, the total population  $\displaystyle \int_{\Omega}\theta_{d,m}(x)dx$ always approaches $\displaystyle \int_{\Omega} m (x)dx$. Thus together with  the observation (\ref{observation}), one sees that total population is maximized at some intermediate  diffusion rate. While in \cite{LiangLou12}, some examples are constructed to demonstrate that the total population, as a function of the random diffusion rate, can have at least two local maxima. It is  shown in \cite{Lou06} that in the competition models,  the invasion of exotic species in spatially heterogeneous habitats is closely related with the total population of the resident species at equilibrium. Hence as a result of  the complicated dependence of the total population on its dispersal rate, the invasion of exotic species
depends on the dispersal rate of the resident species in complicated manners
as well \cite{LiangLou12}.

The above discussion is about single species model with random diffusion, which is the most basic local dispersal strategy. However, in ecology, in many situations (e.g. \cite{Cain,Clark1,Clark2,schurr}), dispersal is better described  as a long range process rather than as a local one, and integral operators appear as a natural choice. A commonly used form that integrates such long range dispersal is the following nonlocal diffusion operator: 
 \begin{equation}\label{nonlocal operator}
   \mathcal{L}u :=\int_\Omega k(x,y)u(y)dy-a(x)u(x),
 \end{equation}
 where the dispersal kernel $k(x,y)\geq 0$ describes the probability to jump from one
location to another and
\begin{itemize}
\item either $a(x)=1$, which corresponds to nonlocal homogeneous Dirichlet boundary condition,
\item or $\displaystyle a(x) = \int_{\Omega} k(y,x)dy$, which corresponds to nonlocal homogeneous Neumann boundary condition.
\end{itemize}
See \cite{Rossi} for details.  This nonlocal diffusion operator appears commonly in different types of models in ecology. See \cite{Allen1996, HMMV, Kot1996, Lee2001, Lutscher, Medlock2003, Meysman2003, Mogilner-E, Othmer} and the references therein.

Studying different types of dispersal strategies in heterogeneous environments has been one of the key approaches to understand growth and survival of individual populations and coexistence of species.
In this paper, we consider  the following single species model with nonlocal dispersal strategy
\begin{equation}\label{main single}
\begin{cases}
  u_t(x,t)=d\mathcal {L} [u] (x,t)    +  u(x,t)   [m(x)-  u(x,t)]   &x\in \Omega,\ t>0,\\
  u(x,0)=u_0\geq 0, &x\in \Omega,
\end{cases}
\end{equation}
where the nonlocal operator is defined as (\ref{nonlocal operator}) and  explore properties of solutions of the problem (\ref{main single}) related to  total equilibrium population.

To be more specific, in the problem (\ref{main single}), we intend to investigate the same two issues discussed above for the  model with  random diffusion  (\ref{general single})  :
\begin{itemize}
\item properties of the upper bound of $E(m)$ defined in (\ref{eqn: def of E(m)}), where $\theta_{d,m}$ denotes the positive steady state to the problem (\ref{main single}) if exists;
\item  dependence of the total population on its dispersal rate.
\end{itemize}
Indeed, not only  the joint effects of spatial heterogeneity and nonlocal dispersal strategies on  total equilibrium population will be studied, but also  some essential  discrepancy between local and nonlocal  dispersal strategies will be  demonstrated.

It is worth mentioning that, similar to the above local diffusion case,
the properties of solutions of single  species model also play an important role in determining population dynamics of  two-species Lotka-Volterra competition systems. See \cite{BL,BL2,BL3} and the references therein.


From now on,  assume that the kernel $k$ satisfies

\medskip
\noindent\textbf{(K)} \   $k(x,y)\in C(\mathbb R^n\times \mathbb R^n)$ is nonnegative and  $k(x,x)>0$ in $\mathbb R^n$. $k(x,y)$ is symmetric,

\ \ \ \ i.e., $k(x,y)=k(y,x)$. Moreover,  $\int_{\mathbb R^n} k(x,y)dy =1$.

\medskip

First of all, we prepare the existence and uniqueness result for the model (\ref{main single}) provided that $m(x) \in L^{\infty} (\Omega)$.

\begin{thm}\label{thm-existence}
Assume that $m(x) \in L^{\infty} (\Omega)$ is nonconstant and the kernel $k$ satisfies \textbf{(K)}. Define
\begin{equation}\label{mu0}
\displaystyle\mu_0=\mu_0(m) =\sup_{0\neq\psi\in L^2(\Omega)} \frac{\int_{\Omega}\left(d\mathcal {L} [\psi] (x)\psi(x) +m(x)\psi^2(x)\right)dx}{\int_{\Omega} \psi^2(x) dx}.
\end{equation}
Then  the problem (\ref{main single})
admits a unique positive steady state in $L^{\infty}(\Omega)$  if and only if $\mu_0>0$.
\end{thm}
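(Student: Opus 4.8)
The plan is to recognize that $\mu_0$ is precisely the principal eigenvalue of the linearization of (\ref{main single}) at the trivial state, and to prove the two implications separately by combining a variational characterization of $\mu_0$ with the method of sub- and supersolutions. First I would record the structural fact that, because $k$ is symmetric, the operator $\mathcal{A}[\psi] := d\mathcal{L}[\psi] + m\psi$ is bounded and self-adjoint on $L^2(\Omega)$: the integral part $\psi \mapsto d\int_\Omega k(x,y)\psi(y)\,dy$ is self-adjoint by \textbf{(K)}, while the remaining terms are multiplication by the real function $m - da$. Consequently $\mu_0 = \sup\sigma(\mathcal{A})$ is the top of the spectrum, and I would argue that it coincides with the generalized principal eigenvalue
\[
\lambda_p := \sup\{\lambda\in\mathbb{R}:\ \exists\,\phi\in C(\bar\Omega),\ \phi>0,\ d\mathcal{L}[\phi]+m\phi\ge\lambda\phi\ \text{in}\ \Omega\}.
\]
The necessity of $\mu_0>0$ is then immediate: if $\theta>0$ is a steady state, then $d\mathcal{L}[\theta]+m\theta=\theta^2$, so testing the Rayleigh quotient (\ref{mu0}) with $\psi=\theta$ yields $\mu_0\ge \int_\Omega\theta^3\,dx\big/\int_\Omega\theta^2\,dx>0$.

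For sufficiency I would build an ordered pair of sub- and supersolutions. A constant $\bar u\equiv M$ with $M\ge\|m\|_\infty$ is a supersolution: in the Neumann case symmetry of $k$ forces $\mathcal{L}[M]=0$, while in the Dirichlet case $\mathcal{L}[M]=M\big(\int_\Omega k(x,y)\,dy-1\big)\le0$ by \textbf{(K)}, and in both cases $M(m-M)\le0$. For the subsolution, since $\mu_0=\lambda_p>0$ I can fix $0<\lambda<\mu_0$ and, by definition of $\lambda_p$, obtain $\phi>0$ with $d\mathcal{L}[\phi]+m\phi\ge\lambda\phi$; then $\underline u:=\varepsilon\phi$ satisfies $d\mathcal{L}[\underline u]+\underline u(m-\underline u)\ge\varepsilon\phi(\lambda-\varepsilon\phi)\ge0$ once $\varepsilon\le\lambda/\|\phi\|_\infty$, and shrinking $\varepsilon$ further gives $\underline u\le\bar u$. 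A monotone-iteration scheme—rewriting the equation as $(PI - d\mathcal{L})u = Pu + u(m-u)$ with $P>\|m\|_\infty$, so that the right-hand side is nondecreasing in $u$ on $[0,M]$ and $(PI - d\mathcal{L})^{-1}$ is a positive operator—then produces a steady state $\theta$ with $\underline u\le\theta\le\bar u$. Positivity of $\theta$ everywhere follows from a nonlocal strong maximum principle, using $k(x,x)>0$ and connectedness of $\Omega$.

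Uniqueness I would deduce from the strict monotonicity of the per-capita growth $s\mapsto m(x)-s$. Dividing the steady-state equation by $\theta_i$ gives $\theta_i(x)=d\int_\Omega k(x,y)\theta_i(y)\,dy\big/\theta_i(x)-da(x)+m(x)$; subtracting the two relations and evaluating at a point where the ratio $\theta_1/\theta_2$ is (essentially) maximal forces this maximal ratio to equal $1$, after which symmetry yields $\theta_1\equiv\theta_2$.

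The main obstacle is the subsolution step, which is exactly where nonlocal dispersal departs from the local theory: for (\ref{nonlocal operator}) the principal eigenvalue $\mu_0$ need not be attained by an $L^2$ eigenfunction, so one cannot simply take $\underline u=\varepsilon\phi$ with $\mathcal{A}\phi=\mu_0\phi$. The device that rescues the argument is the identity $\mu_0=\lambda_p$ for self-adjoint nonlocal operators, which produces, for every $\lambda<\mu_0$, a genuinely positive and continuous test function satisfying the required differential inequality even when no eigenfunction exists. Establishing this identity rigorously, together with the nonlocal strong maximum principle and the $L^\infty$ regularity needed to justify the pointwise ratio argument in the uniqueness proof, is the technical core of the proof.
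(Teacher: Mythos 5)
Your necessity direction and the choice of $M=\|m\|_{L^\infty}$ as a supersolution are fine, but the heart of your sufficiency argument --- the identity $\mu_0=\lambda_p$ producing, for each $\lambda<\mu_0$, a positive \emph{continuous} $\phi$ with $d\mathcal{L}[\phi]+m\phi\ge\lambda\phi$ --- is asserted rather than proved, and it is precisely the step that is not available at the stated level of generality. The results identifying the variational quantity (\ref{mu0}) with a generalized principal eigenvalue that admits positive continuous test functions (see \cite{BZh,BL2,Coville2010}) are established for $m\in C(\bar\Omega)$; the theorem here assumes only $m\in L^{\infty}(\Omega)$, and the paper explicitly introduces a different device because the eigenvalue route requires continuity of $m$. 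For merely bounded measurable $m$ the operator $\psi\mapsto d\mathcal{L}[\psi]+m\psi$ has no regularizing effect, $\mu_0$ need not be attained, and you give no construction of the required $\phi$; labelling this step ``the technical core'' does not discharge it. The paper instead proves nontriviality of the limit $\theta^*$ of the monotone orbit started from $M$ by an energy argument: it truncates and rescales a near-optimal $L^2$ test function from (\ref{mu0}) to obtain initial data $\phi$ with $E[\phi]>0$, where $E[v]=\frac12\int_\Omega(d\mathcal{L}[v]v+mv^2)\,dx-\frac13\int_\Omega v^3\,dx$, notes that $\frac{d}{dt}E[v](t)=\int_\Omega v_t^2\,dx\ge0$ by symmetry of $k$, and contradicts $E[v](t)\to0$, which would follow if the orbit decayed to zero. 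This uses only an $L^2$ test function and avoids eigenfunctions entirely; to make your plan work you would have to either reproduce something of this kind or first prove $\mu_0=\lambda_p$ for $L^{\infty}$ coefficients.

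Your uniqueness step also leaves a gap: the ratio $\theta_1/\theta_2$ of two $L^{\infty}$ steady states need not attain its essential supremum, and running the argument along a maximizing sequence requires a uniform positive lower bound on the steady states (true here, via $\int_\Omega k(x,y)\theta(y)\,dy\ge c>0$, but it must be established). The paper takes a different and cleaner route: an $L^p$ estimate shows any positive steady state obeys $\|\theta\|_{L^{\infty}}\le\|m\|_{L^{\infty}}$, hence $\theta\le\theta^*$ by comparison with the orbit from $M$, and then symmetry of $k$ gives $\int_\Omega(\theta^*-\theta)\theta\theta^*\,dx=-d\int_\Omega\mathcal{L}[\theta]\theta^*\,dx+d\int_\Omega\mathcal{L}[\theta^*]\theta\,dx=0$, which forces $\theta\equiv\theta^*$.
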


When $m\in C(\bar\Omega)$, the existence and uniqueness of positive steady state for the model (\ref{main single}) has been studied thoroughly.   See \cite{BZh} for symmetric operators in the one dimensional case and  \cite{BL2, Coville2010}  for nonsymmetric operators. The proofs of these studies rely on the properties of nonlocal eigenvalue problems, thus the condition $m\in C(\bar\Omega)$ is required. However, to study the questions in this paper, the condition $m(x) \in L^{\infty} (\Omega)$ is necessary.
For this purpose, we employ a new approach, which depends on the application of energy functional.

Now consider
\begin{equation}\label{main single-N}
\begin{cases}
  u_t(x,t)=d\mathcal {L}_N [u] (x,t)    +  u(x,t)   [m(x)-  u(x,t)]   &x\in \Omega,\ t>0,\\
  u(x,0)=u_0\geq 0   &x\in \Omega,
\end{cases}
\end{equation}
where
\begin{equation}\label{nonlocal operator-N}
\mathcal{L}_N \phi :=\int_\Omega k(x,y)\phi(y)dy-\int_\Omega k(y,x) \phi(x) dy.
\end{equation}
The nonlocal operator $\mathcal{L}_N$ defined above is nonlocal homogeneous Neumann boundary condition. For suitably rescaled kernels, the convergence between problems with nonlocal operator $\mathcal{L}_N$ and those with homogeneous Neumann boundary conditions is verified \cite{Rossi}.

Thanks to Theorem \ref{thm-existence},  when $m(x)$ satisfies the condition \textbf{(M)}, it is easy to see that the problem (\ref{main single-N})
admits a unique positive steady state, still denoted by $\theta_{d,m}$,  in $L^{\infty}(\Omega)$.
For further discussions, we first present some basic properties about the total equilibrium population $\displaystyle \int_{\Omega}\theta_{d,m}(x)dx$ of the model (\ref{main single-N}).



\begin{prop}\label{prop}
Assume that the assumptions \textbf{(K)} and \textbf{(M)} hold for the model (\ref{main single-N}). Let $\theta_{d,m}(x)$ denote the unique positive  steady state to the problem (\ref{main single-N}).
Then the following properties hold.
\begin{itemize}
\item[(i)]  $\displaystyle  \int_{\Omega}\theta_{d,m}(x)\,dx > \int_{\Omega}m(x)\,dx$ for all $d > 0$.
\item[(ii)] $\displaystyle \lim_{d\rightarrow 0^+} \theta_{d,m} =m$ in $L^{\infty}$. Moreover, if  assume that ${\rm ess\, inf}_{x\in\Omega} m>0$, then there exist $d_0>0$  such that $\displaystyle \int_{\Omega}\theta_{d,m}(x)dx $  is increasing in $d$ in $[0, d_0]$.
\item[(iii)] $\displaystyle \lim_{d\rightarrow \infty} \theta_{d,m} ={1\over |\Omega|} \int_{\Omega} m(x) dx$ in $L^{\infty}(\Omega)$.
\end{itemize}
\end{prop}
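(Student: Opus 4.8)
The plan rests on three elementary identities for the symmetric Neumann operator, written as $\mathcal L_N[\phi]=k\ast\phi-a\,\phi$ with $(k\ast\phi)(x):=\int_\Omega k(x,y)\phi(y)\,dy$ and $a(x):=\int_\Omega k(x,y)\,dy$. Since $\int_\Omega\mathcal L_N[\phi]\,dx=0$ for every $\phi$ (Fubini together with the symmetry $k(x,y)=k(y,x)$), integrating the steady-state equation $d\mathcal L_N[\theta_{d,m}]+\theta_{d,m}(m-\theta_{d,m})=0$ over $\Omega$ gives the mass identity $\int_\Omega\theta_{d,m}^2\,dx=\int_\Omega m\,\theta_{d,m}\,dx$. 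Symmetrizing the double integrals yields the quadratic-form identity $-\int_\Omega\mathcal L_N[\phi]\phi\,dx=\tfrac12\int_\Omega\int_\Omega k(x,y)(\phi(x)-\phi(y))^2\,dy\,dx\ge0$ and, for $\phi>0$, the division identity $\int_\Omega \mathcal L_N[\phi]/\phi\,dx=\tfrac12\int_\Omega\int_\Omega k(x,y)\frac{(\phi(x)-\phi(y))^2}{\phi(x)\phi(y)}\,dy\,dx\ge0$, each strict whenever $\phi$ is nonconstant. I will also use the a priori bound $0<\theta_{d,m}\le\sup_\Omega m$, obtained by comparing with the constant super/subsolutions $\sup m$ and $0$.

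For (i) I would divide the equation by $\theta_{d,m}>0$ and integrate: the nonlocal term contributes $d\int_\Omega \mathcal L_N[\theta_{d,m}]/\theta_{d,m}\,dx$, which is strictly positive by the division identity because $m$, hence $\theta_{d,m}$, is nonconstant; therefore $\int_\Omega(m-\theta_{d,m})\,dx=-d\int_\Omega \mathcal L_N[\theta_{d,m}]/\theta_{d,m}\,dx<0$, i.e.\ $\int_\Omega\theta_{d,m}>\int_\Omega m$.

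For (iii) I multiply the equation by $\theta_{d,m}$ and integrate; the quadratic-form identity and the uniform bound give $\tfrac{d}{2}\int_\Omega\int_\Omega k(x,y)(\theta_{d,m}(x)-\theta_{d,m}(y))^2\,dy\,dx=\int_\Omega\theta_{d,m}^2(m-\theta_{d,m})\,dx\le C$, so the Dirichlet energy is $O(1/d)$. A nonlocal Poincar\'e inequality---available because $\Omega$ is a connected bounded domain and $a(x)$ is bounded below away from $0$, so that $-\mathcal L_N$ has a spectral gap above its simple zero eigenvalue spanned by constants---then forces $\|\theta_{d,m}-\bar\theta_d\|_{L^2}\to0$ with $\bar\theta_d=|\Omega|^{-1}\int_\Omega\theta_{d,m}$. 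Passing to a subsequential $L^2$-limit, the mass identity identifies the constant as $\bar m=|\Omega|^{-1}\int_\Omega m$ (the root $0$ being excluded by (i)); uniqueness of the limit upgrades this to convergence of the whole family. Finally I bootstrap to $L^\infty$: rewriting the equation as $\theta_{d,m}=(k\ast\theta_{d,m})\big/\big(a+(\theta_{d,m}-m)/d\big)$ and using $(k\ast(\theta_{d,m}-\bar m))(x)\to0$ uniformly (from the $L^2$ convergence and Cauchy--Schwarz) together with $\inf a>0$ gives $\|\theta_{d,m}-\bar m\|_{L^\infty}\to0$.

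For (ii), the bound $\|\mathcal L_N[\theta_{d,m}]\|_{L^\infty}\le 2\sup m$ recasts the equation as the pointwise confinement $\|\theta_{d,m}(m-\theta_{d,m})\|_{L^\infty}\le 2d\sup m\to0$, so a.e.\ $\theta_{d,m}(x)$ lies near a root of $s\mapsto s(m(x)-s)$, i.e.\ near $0$ or near $m(x)$. The delicate point---and the main obstacle---is to rule out the lower branch on any set where $m\ge\eta>0$: there the confinement gives $\theta_{d,m}\le 2d\sup m/\eta$, whence $\mathcal L_N[\theta_{d,m}]\le0$ forces $(k\ast\theta_{d,m})(x)\le a(x)\theta_{d,m}(x)=O(d)$, and since $k$ is bounded below near the diagonal this makes the local average of $\theta_{d,m}$ small; a covering-plus-connectedness argument should propagate this smallness to a set of definite measure, contradicting $\int_\Omega\theta_{d,m}>\int_\Omega m>0$ from (i). This yields $\theta_{d,m}\to m$ in $L^\infty$. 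For the monotonicity statement, assuming $\operatorname{ess\,inf}m>0$ the linearized operator $\phi\mapsto d\mathcal L_N\phi+(m-2\theta_{d,m})\phi$ is, for small $d$, a small perturbation of multiplication by $m-2\theta_{d,m}\approx-m\le-\operatorname{ess\,inf}m<0$, hence boundedly invertible; the implicit function theorem makes $\theta_{d,m}$ differentiable in $d$ with $\theta':=\partial_d\theta_{d,m}$ solving the linearized equation, and letting $d\to0^+$ gives $\theta'\to \mathcal L_N[m]/m$. Integrating and applying the division identity to $m$ yields $\partial_d\!\int_\Omega\theta_{d,m}\,dx\big|_{d\to0^+}=\int_\Omega \mathcal L_N[m]/m\,dx>0$; continuity of this derivative in $d$ then furnishes $d_0>0$ on which the total population is increasing.
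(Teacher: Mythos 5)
Your treatment of (i) is exactly the paper's computation (divide by $\theta_{d,m}$, integrate, symmetrize), and your implicit-function-theorem derivation of $\frac{d}{dd}\int_\Omega\theta_{d,m}\,dx\big|_{d=0^+}=\frac12\int_\Omega\int_\Omega k(x,y)\frac{(m(x)-m(y))^2}{m(x)m(y)}\,dy\,dx>0$ for the monotonicity half of (ii) is sound and, if anything, more careful than the paper's. The genuine gap is in the first half of (ii), the claim $\theta_{d,m}\to m$ in $L^\infty$ for general $m\ge 0$: your pointwise confinement only places $\theta_{d,m}(x)$ near one of the two roots $0$ or $m(x)$, and the step meant to exclude the lower branch (``a covering-plus-connectedness argument should propagate this smallness\dots, contradicting $\int\theta>\int m$'') is not carried out and does not obviously close. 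The lower-branch set depends on $d$; a single lower-branch point $x_0$ need not satisfy $\int_{B_r(x_0)}m>0$; and even a bound on the measure of that set would yield convergence in measure, not in $L^\infty$. The paper sidesteps all of this with one observation you missed: at each fixed $x$ the steady-state equation is a quadratic in $\theta(x)$ with constant term $-d\int_\Omega k(x,y)\theta_{d,m}(y)\,dy<0$, so the two roots have opposite signs and positivity forces
\begin{equation*}
\theta_{d,m}(x)=\tfrac12\left[m(x)-da(x)+\sqrt{(m(x)-da(x))^2+4d\int_\Omega k(x,y)\theta_{d,m}(y)\,dy}\,\right].
\end{equation*}
Since $\|\theta_{d,m}\|_{L^\infty}\le\|m\|_{L^\infty}$, every $d$-dependent term vanishes uniformly as $d\to0^+$ and the right-hand side converges uniformly to $\frac12(m+|m|)=m$; branch selection is automatic and no propagation argument is needed. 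You should replace your sketch by this formula (which you in effect already use in part (iii) when you rewrite $\theta=(k\ast\theta)/(a+(\theta-m)/d)$).

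For (iii) you take a genuinely different and essentially correct route: multiplying by $\theta_{d,m}$ gives Dirichlet energy $O(1/d)$, a nonlocal Poincar\'e inequality gives $\|\theta_{d,m}-\bar\theta_d\|_{L^2}\to0$, the mass identity $\int_\Omega\theta_{d,m}^2=\int_\Omega m\theta_{d,m}$ identifies the constant limit as $\bar m$ (with $0$ excluded by (i)), and Cauchy--Schwarz upgrades to $L^\infty$. The one assertion you owe the reader is the spectral gap itself: it holds because $-\mathcal L_N$ is multiplication by $a$ (with $\inf_{\bar\Omega}a>0$, so essential spectrum bounded away from $0$) minus a compact operator, and its kernel consists only of constants since $k>0$ near the diagonal and $\Omega$ is connected; this should be stated and proved or cited. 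The paper instead applies Arzel\`a--Ascoli to $(k\ast\theta_{d,m})/a$, shows any limit $\ell$ solving $\ell=(k\ast\ell)/a$ is constant by a maximum-principle argument, and identifies the constant the same way. Your version buys a quantitative rate $\|\theta_{d,m}-\bar\theta_d\|_{L^2}=O(d^{-1/2})$; the paper's avoids spectral theory entirely.
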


Proposition \ref{prop} shows that, similar to the model with random diffusion (\ref{general single}),  the property (\ref{observation}) still holds  for the model with nonlocal dispersal strategy (\ref{main single-N}).

The following  theorems completely answer the  question proposed by W.-M. Ni for the single species model  with nonlocal dispersal strategies.  Our first result indicates that the order of the supremum of $ E(m)$ is at most $O(\sqrt{d})$ when the total resources are given.

\begin{thm}\label{theorem-unbounded-rate}
	Assume that $\Omega$ is a bounded domain in $\mathbb R^n$, $n\geq 1$, and $m(x)$ satisfies the condition \textbf{(M)}. Then there exists $C_0>0$, which depends on $\displaystyle \int_{\Omega}m (x)dx$ only, such that for $d\geq 1$
	\begin{equation}\label{thm-ratio-upper}
		\sup\,\{ E(m)\,|\, m \text{ satisfies condition \textup{(\textbf{M})}}\} \leq  C_0 \sqrt{d},
	\end{equation}
where $E(m)$ is defined in (\ref{eqn: def of E(m)}) and $\theta_{d,m}$ denotes the unique positive steady state to the problem (\ref{main single-N}).
\end{thm}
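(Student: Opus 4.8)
The plan is to reduce everything to a single integral identity, then split $\Omega$ into the resource-rich and resource-poor parts, where the decisive gain of a factor $\sqrt d$ comes from the fact that the rich part has measure $O(1/d)$. Throughout write $\theta=\theta_{d,m}$, $M:=\int_\Omega m$, $T:=\int_\Omega\theta$, $a(x):=\int_\Omega k(y,x)\,dy$, $(k\ast\theta)(x):=\int_\Omega k(x,y)\theta(y)\,dy$, $\kappa:=\max_{\bar\Omega\times\bar\Omega}k$, and $a_{\min}:=\min_{\bar\Omega}a$, which is positive by the continuity and positivity in \textbf{(K)}. First I would integrate the steady-state equation over $\Omega$; since the symmetry of $k$ gives $\int_\Omega\mathcal{L}_N[\theta]\,dx=0$, this yields the fundamental identity $\int_\Omega\theta^2=\int_\Omega m\theta$ (the same computation underlies Proposition~\ref{prop}(i)). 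Solving the pointwise relation $\theta^2=\theta(m-da)+d(k\ast\theta)$ as a quadratic in $\theta(x)$ produces the one-sided bound $\theta(x)\le (m(x)-da(x))_+ +\sqrt{d\,(k\ast\theta)(x)}$, and evaluating at a maximum point gives the crude a priori bound $0<\theta\le\|m\|_\infty$.

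Next I would introduce the dichotomy $\Omega=G\cup B$ with $G:=\{m>da\}$ and $B:=\{m\le da\}$. On $G$, Chebyshev's inequality gives the key smallness $|G|\le M/(d\,a_{\min})$. Integrating the pointwise bound over $G$ and using Cauchy--Schwarz together with $\int_G(k\ast\theta)=\int_\Omega\theta(y)\big(\int_G k(x,y)\,dx\big)\,dy\le \kappa|G|\,T$, the explicit factor $\sqrt d$ is cancelled against $|G|^{1/2}\sim d^{-1/2}$, leading to an estimate of the form $\int_G\theta\le M+C\,M\,d^{-1/2}\,T^{1/2}$ with $C=C(\Omega,k)$. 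Thus the spike over the rich region contributes essentially only the mass $M$.

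The heart of the matter is the poor region $B$, and this is where I expect the real obstacle. Here $m\le da\le d$ (using $a\le1$), so $\int_B m^2\le d\int_B m\le dM$. One must resist bounding $\theta$ pointwise on $B$: dropping the absorption term $-da\theta$ destroys precisely the cancellation that keeps the background level small, and the naive estimates $\theta^2\le d(k\ast\theta)$ or $\int_\Omega(k\ast\theta)\le T$ conflate the spike over $G$ with the bulk and only yield the useless bound $T=O(d)$. Instead I would integrate the equation over $B$ itself, $\int_B\theta^2=d\int_B\mathcal{L}_N[\theta]+\int_B m\theta$, and control the net nonlocal flux by $\int_B\mathcal{L}_N[\theta]=\int_G a_B\theta-\int_B a_G\theta\le\int_G\theta$ (where $a_B,a_G\le1$), while $\int_B m\theta\le (dM)^{1/2}(\int_B\theta^2)^{1/2}$. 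Feeding in the $G$-bound and absorbing $\int_B\theta^2$ via Young's inequality gives $\int_B\theta^2\le C\big(dM+\sqrt d\,M\,T^{1/2}\big)$, whence Cauchy--Schwarz on $B$ yields $\int_B\theta\le C\big(\sqrt{dM}+d^{1/4}M^{1/2}T^{1/4}\big)$.

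Adding the two regions produces a self-improving inequality $T\le M+C_1Md^{-1/2}T^{1/2}+C_2\sqrt{dM}+C_3d^{1/4}M^{1/2}T^{1/4}$ in which every term involving $T$ carries a power of $d$ strictly smaller than $1/2$. A final application of Young's inequality absorbs all such terms into $\tfrac12 T$, and since $d\ge1$ the remaining constants can be collected into a factor $\sqrt d$, giving $T\le \tilde C(M)\sqrt d$ and hence $T/M\le C_0(M)\sqrt d$, which is the assertion \eqref{thm-ratio-upper}. The single hard step is the $B$-estimate: diffusion limits the total population not by suppressing the spike over $G$ (whose mass is already controlled by $M$) but by fixing the background through a flux balance between $G$ and $B$, and the whole argument turns on exploiting $|G|=O(1/d)$ to convert the dangerous $O(d)$ flux into an $O(\sqrt d)$ contribution.
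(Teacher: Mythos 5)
Your argument is correct and delivers the stated bound, but it is organized differently from the paper's proof, so a comparison is worthwhile. The paper decomposes $\Omega$ by level sets of the \emph{solution}, $\Omega_2=\{\theta_{d,m}>K_2d\}$, and must therefore run a two-stage bootstrap: first a rough bound $\int_\Omega\theta_{d,m}\le Cd$ obtained from the set $\Omega_1=\{\theta_{d,m}>K_1d\}$, then a containment lemma $\Omega_2\subseteq\{m\ge \tfrac d2 a\}$ (proved by solving the quadratic on the complement and invoking the rough bound) to transfer the Chebyshev smallness $|\{m\ge\tfrac d2a\}|=O(1/d)$ to $\Omega_2$, and finally the $L^2$ estimate $\int_{\Omega_2^c}\theta_{d,m}^2\le K_3d$ by integrating the equation over $\Omega_2^c$. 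You instead split directly by the \emph{resource}, $G=\{m>da\}$ versus $B=\{m\le da\}$, which makes the Chebyshev bound $|G|\le M/(da_{\min})$ immediate and lets you skip both the rough $O(d)$ bound and the containment claim entirely; the price is that $\theta$ is not a priori small on $B$, which you pay for with the Cauchy--Schwarz step $\int_B m\theta\le(dM)^{1/2}(\int_B\theta^2)^{1/2}$ (using $m\le da\le d$ on $B$) in place of the paper's pointwise bound $\theta\le K_2d$ on $\Omega_2^c$, and you close with a self-improving inequality in $T=\int_\Omega\theta$ absorbed by Young's inequality rather than by sequential substitution. The shared core is identical in both proofs --- a rich region of measure $O(1/d)$ contributing only $O(M)$ plus lower-order terms, an $L^2$ bound of order $d$ on the bulk from integrating the equation and controlling the interregion flux by $d\int_{\mathrm{rich}}\theta$, and a final Cauchy--Schwarz producing $\sqrt d$ --- but your route is leaner (one decomposition, one absorption) while the paper's is more explicit about constants, which is convenient since the theorem asserts that $C_0$ depends on $\int_\Omega m$ alone. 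One cosmetic caveat: your opening identity $\int_\Omega\theta^2=\int_\Omega m\theta$ and the sup bound $\theta\le\|m\|_{L^\infty}$ are never actually used in your main chain and could be dropped; also, since $m$ and $\theta$ are only in $L^\infty$, ``evaluating at a maximum point'' should be replaced by an essential-supremum argument (the paper does this via $L^p$ norms in the proof of Theorem \ref{thm-existence}), though nothing downstream depends on it.
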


Moreover, examples are constructed as follows to show that  the order $O(\sqrt{d})$ is optimal under the prescribed total resources. For $\epsilon>0$, define
	\begin{equation}\label{example-m-epsilon}
		m_{\epsilon}(x) =\begin{cases}
			\displaystyle 0  &x\in \Omega\setminus \Omega_{0,\epsilon},\\
			\displaystyle {a(x_0)\over\epsilon}   &x\in \Omega_{0,\epsilon},
		\end{cases}
	\end{equation}
	where  $\Omega_{0,\epsilon} $ denotes a ball with center $x_0\in\Omega$ and radius $\displaystyle\sqrt[n]{\epsilon}$ with $\epsilon$  small  enough such that $\Omega_{0,\epsilon}  \subset \Omega$. Note that the total resources are independent of $\epsilon$, since
$$
\int_{\Omega} m_{\epsilon}(x) dx = \int_{\Omega_{0,\epsilon}} {a(x_0)\over \epsilon} dx = \omega_n   a(x_0),
$$
where $\omega_n$ denotes the volume of the unit ball in $\mathbb R^n$.

\begin{thm}\label{theorem-unbounded-example}
	Assume that $\Omega$ is a bounded domain in $\mathbb R^n$, $n\geq 1$ and $m_{\epsilon}$ is defined in (\ref{example-m-epsilon}).
	Then there exists $C_1>0$, independent of  $d$ and $m_{\epsilon} (x)$, such that
	$$
	\int_{\Omega} \theta_{d,m_{\epsilon}} (x)dx \geq C_1 \sqrt{d},
	$$
	provided that $\displaystyle \lim_{d\rightarrow +\infty}\epsilon d = \alpha \in [0,1)$, where $\theta_{d,m_{\epsilon}}$ denotes the unique positive steady state to the problem (\ref{main single-N}) with $m(x)$ replaced by $m_{\epsilon}(x)$.
\end{thm}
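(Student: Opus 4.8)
The plan is to extract the $\sqrt d$ growth directly from the steady‑state equation for $\theta=\theta_{d,m_\epsilon}$, combining one integral identity with a lower bound on the population inside the resource ball and a sharp $L^\infty$ bound outside it. Write $B=\Omega_{0,\epsilon}$, $a_0=a(x_0)=\int_\Omega k(y,x_0)\,dy>0$, $K=\max_{\bar\Omega\times\bar\Omega}k$, $I=\int_B\theta$, $P=\int_\Omega\theta$, and note $a(x)=\int_\Omega k(x,y)\,dy\le1$ by \textbf{(K)}. Integrating the equation over $\Omega$ and using the symmetry of $k$ (which makes $\int_\Omega\mathcal{L}_N\theta\,dx=0$) yields the basic identity $\int_\Omega\theta^2=\int_\Omega m_\epsilon\theta=\frac{a_0}{\epsilon}I$. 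Cauchy--Schwarz on $B$ together with $\int_B\theta^2\le\int_\Omega\theta^2$ then forces $I\le\omega_n a_0$, so in particular $\int_\Omega\theta^2\le\omega_n a_0^2/\epsilon=O(d)$.

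The first substantive step is a uniform lower bound $I\ge c_1>0$. I would check that $\underline\theta:=s\chi_B$ with $s=\frac{(1-\alpha)a_0}{2\epsilon}$ is a subsolution for all large $d$: on $\Omega\setminus B$ one has $\mathcal{L}_N\underline\theta\ge0$, while on $B$ one computes $\mathcal{L}_N\underline\theta=-s\gamma$ with $\gamma(x)=\int_{\Omega\setminus B}k(x,y)\,dy\le a(x)$, so the subsolution inequality reduces to $\frac{a_0}{\epsilon}-s-d\gamma\ge0$, which holds because $d\gamma\le d(a_0+o(1))$ and $d\epsilon\to\alpha<1$. Comparing $\underline\theta$ with the constant supersolution $\overline\theta\equiv a_0/\epsilon$ and appealing to uniqueness of the positive steady state gives $\theta\ge\underline\theta$, hence $I\ge s\,|B|=\frac{(1-\alpha)a_0\omega_n}{2}=:c_1$. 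This is precisely where the hypothesis $\alpha<1$ is used.

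Next I would show that the $L^2$-mass of $\theta$ sits mostly outside $B$. Integrating the equation over $B$ and using symmetry gives the flux identity $\int_B\theta^2-\frac{a_0}{\epsilon}I=d\big(\int_{\Omega\setminus B}\beta\theta-\int_B\gamma\theta\big)$, where $\beta(y)=\int_B k(x,y)\,dx\le K\omega_n\epsilon$; subtracting this from the identity of the first paragraph yields $\int_{\Omega\setminus B}\theta^2=d\int_B\gamma\theta-d\int_{\Omega\setminus B}\beta\theta$. For $\epsilon$ small $\gamma\ge a_0/2$ on $B$, so the first term is $\ge\frac{a_0}{2}dI\ge\frac{a_0c_1}{2}d$, while the correction is at most $dK\omega_n\epsilon P\le C d\sqrt\epsilon=O(\sqrt d)$ by the $L^2$-bound above and $d\epsilon\to\alpha$. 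Hence $\int_{\Omega\setminus B}\theta^2\ge\frac{a_0c_1}{4}d$ for $d$ large.

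The crux, and what I expect to be the main obstacle, is the matching sup bound $M:=\|\theta\|_{L^\infty(\Omega\setminus B)}\le C_2\sqrt d$. On $\Omega\setminus B$ the equation reads $\theta(x)^2=d\int_\Omega k(x,y)\big(\theta(y)-\theta(x)\big)\,dy$, and the naive estimate $\theta^2\le d\int_\Omega k\theta$ only gives $\theta=O(d^{3/4})$, since $\int_\Omega k\theta$ is of order $\|\theta\|_{L^2}=O(\sqrt d)$; one must exploit the cancellation inside $\int k(\theta(y)-\theta(x))$. Evaluating at a point $x_*\in\Omega\setminus B$ within $\delta$ of the essential supremum, the contribution of $\Omega\setminus B$ is $\le\delta$ (because $\theta(y)-\theta(x_*)\le M-\theta(x_*)<\delta$ a.e.\ there) while the contribution of $B$ is $\le\int_B k(x_*,y)\theta(y)\,dy\le KI\le K\omega_n a_0$; letting $\delta\to0$ gives $M^2\le dK\omega_n a_0$, i.e.\ $M\le C_2\sqrt d$. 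Combining the last two steps, $P\ge\int_{\Omega\setminus B}\theta\ge\frac{\int_{\Omega\setminus B}\theta^2}{M}\ge\frac{a_0c_1d/4}{C_2\sqrt d}=C_1\sqrt d$, with $C_1$ depending only on $a_0,\omega_n,K,|\Omega|$ and $\alpha$. The remaining care is routine: justifying $\theta\ge\underline\theta$ from the sub/supersolution pair via the monotone dynamics and uniqueness of Theorem~\ref{thm-existence}, the measurability in the ess-sup argument, and absorbing a compact range of $d$ into the constant.
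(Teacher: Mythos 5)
Your argument is correct, but it follows a genuinely different route from the paper's. The paper works from the explicit quadratic-formula representation of $\theta_{d,m_\epsilon}$, Taylor-expands the square root on $\Omega\setminus\Omega_{0,\epsilon}$ with a remainder $\xi$, and converts the resulting quadratic term $\frac1d\bigl(\int_\Omega k\,\theta_{d,m_\epsilon}\bigr)^2$ into a telescoping flux identity over $\Omega_{0,\epsilon}$ whose limit is computed exactly to be $\omega_n(1-\alpha)a^2(x_0)$; the conclusion then follows from $\int_\Omega\bigl(\int_\Omega k\theta\bigr)^2\le\|k\|_{L^\infty}^2|\Omega|\bigl(\int_\Omega\theta\bigr)^2$, and the whole computation leans on Theorem \ref{theorem-unbounded-rate} to control $\int_\Omega k\theta/d$. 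You instead combine three structural estimates: a sub/supersolution comparison giving $\int_B\theta\ge c_1>0$ (this is where $\alpha<1$ enters for you, just as it produces the factor $1-\alpha$ in the paper's limit), the flux identity over $B$ showing $\int_{\Omega\setminus B}\theta^2\gtrsim d$, and the pointwise bound $\|\theta\|_{L^\infty(\Omega\setminus B)}\le C\sqrt d$ obtained by exploiting the cancellation in $\int k(\theta(y)-\theta(x))\,dy$ at a near-maximum point --- the last being the genuinely new ingredient, since the naive estimate only gives $O(d^{3/4})$ as you note. Your version is self-contained (it does not invoke Theorem \ref{theorem-unbounded-rate}) and yields extra information (the profile $\theta\approx(1-\alpha)a_0/(2\epsilon)$ on the resource ball and the sharp sup bound off it), at the cost of needing the comparison/uniqueness machinery of Theorem \ref{thm-existence} to justify $\theta\ge\underline\theta$; the paper's version is shorter given its Theorem \ref{theorem-unbounded-rate} and identifies the limiting constant precisely. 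Two trivial slips, neither of which affects the argument: the parenthetical claim $\omega_na_0^2/\epsilon=O(d)$ fails when $\alpha=0$ and $\epsilon\ll1/d$ (but you only ever use $d\sqrt\epsilon=\sqrt d\,\sqrt{d\epsilon}\le\sqrt d$), and the contribution of $\Omega\setminus B$ in the sup-bound step is $\le d\delta$ rather than $\le\delta$ (harmless since $\delta\to0$); also note the conclusion for a bounded range of $d$ should be absorbed via Proposition \ref{prop}(i), i.e.\ $\int_\Omega\theta>\omega_na_0$, as the large-$d$ hypothesis $\epsilon d\to\alpha$ only controls the tail.
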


\begin{remark}
According to Theorem \ref{theorem-unbounded-example}, one sees that the supremum of $E(m)$ defined in (\ref{eqn: def of E(m)}) over all $m$'s satisfying condition (\textbf{M}) for the problem (\ref{main single-N}) is always unbounded. Moreover, the unboundedness of the supremum of $E(m)$  is due to the unboundedness of diffusion rate $d$ and the dimension of domains does not affect the order of the supremum  of $E(m)$. This is dramatically different from the corresponding results discussed previously for the  local model (\ref{general single}).
\end{remark}




Now we study the second question, which is about the dependence of the total population on its dispersal rate. Thanks to Proposition \ref{prop}, one sees that the total equilibrium population  of the model (\ref{main single-N}) must be maximized at some intermediate diffusion rate for symmetric nonlocal dispersal strategies and positive resources.
In particular, for a  type of simplified nonlocal dispersal operator, we provide a sufficient condition, which guarantees that the total population, as a function of the  diffusion rate, have exactly one local maximum.

To be more specific, consider  the following  nonlocal problem
\begin{equation}\label{single-simple}
\begin{cases}
  u_t(x,t)=d [\bar u (t) - u(x,t) ]   +  u(x,t)   [m(x)-  u(x,t)]   &x\in \Omega,\ t>0,\\
  u(x,0)=u_0\geq 0, &x\in \Omega,
\end{cases}
\end{equation}
where $\bar f$ represents the spatial average of any function $f$ in $\Omega$.
From the viewpoint of biology, this simplified nonlocal dispersal operator corresponds to the case that the movement distance of the species is much larger than the diameter of the habitat. This has been illustrated in \cite{LLW}. We briefly recall it here for the convenience of readers.
Consider the nonlocal dispersal operator \cite{HMMV}
\begin{equation}\label{Hutson}
\mathcal{L}u:=d\Big[\frac{1}{L}\int_{-\infty}^\infty
k \left(\frac{x-y}{L}\right)u(y)dy-u(x)\Big],
\end{equation}
where  $k$ is a non-negative symmetric function satisfying $\displaystyle\int_{-\infty}^{\infty}k(y)dy=1$,
and $k(x-y)$ represents the probability of movement
between $x$ and $y$ and the
dispersal spread  length $L$ characterizes the movement distance.
When $L$ is sufficiently small, by formal expansion,
the  operator  $\mathcal{L}$
can be written as
$$
\mathcal{L}u=\frac{1}{2} dL^2 \left(\int_{-\infty}^\infty k(z) z^2\, dz\right) u_{xx}+O(L^3).
$$
That is, the dispersal operator  $\mathcal{L}$ for small spread length  $L$
can be approximated by random diffusion operator.
However, for large spread length $L$,
as explained in \cite{LLW}, if $u$ is a periodic function with period $l>0$, then the  operator $\mathcal{L}$
can be approximated by operator $\mathcal{L}_1$, where
$$
\mathcal{L}_1u:=d\left[ \frac{1}{l}\int_0^l u(y) dy-u(x)\right],
$$
which leads to the nonlocal dispersal operator in (\ref{single-simple}). In genetic models, this nonlocal dispersal operator is introduced by T. Nagylaki \cite{N11} to represent global panmixia, which is the limiting case of long-distance migration.

Again, due to Theorem \ref{thm-existence}, when $m(x)$ satisfies the condition \textbf{(M)}, the problem (\ref{single-simple}) admits a unique positive steady state in $L^{\infty}(\Omega)$, denoted by $\theta_d(x) $.
For simplicity, set
$$
\displaystyle \bar \theta(d) = {1\over |\Omega|} \int_{\Omega} \theta_d(x) dx .
$$
We have the following result.

\begin{thm}\label{theorem-total}
 Suppose that $m(x)$ satisfies the condition \textbf{(M)} and
 \begin{equation}\label{theorem-ratio}
 \frac{\sup_{\Omega} m}{\inf_{\Omega} m}<\frac{\sqrt{5}+1}{2},
 \end{equation}
 then there
  exists $ L\in (\inf_{\Omega} m, \sup_{\Omega} m)$ such that $\bar{\theta}(d)$ is non-decreasing in
  $d$ when $d\le L$ and non-increasing in $d$ when $d>L$.
\end{thm}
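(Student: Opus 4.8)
The plan is to collapse the steady-state problem to a scalar self-consistency relation and then study $s(d):=\bar\theta(d)$ by implicit differentiation. Reading the steady-state equation $d(s-\theta)+\theta(m-\theta)=0$ (with $s=\bar\theta$) pointwise, $\theta(x)$ is the positive root of a quadratic, so
\[
\theta(x)=\frac{(m(x)-d)+\sqrt{(m(x)-d)^2+4ds}}{2},
\]
and $s$ is fixed by the scalar equation $s=\langle\theta\rangle$, where $\langle f\rangle:=\frac{1}{|\Omega|}\int_\Omega f\,dx$. Since $\theta$ is smooth in $(d,s)$ for $s>0$ and the map $s\mapsto\langle\theta\rangle$ is concave with slope $dA<1$ at the solution (here $R:=2\theta+d-m$ and $A:=\langle 1/R\rangle$), the implicit function theorem makes $s(d)$ smooth and gives $s'=B/(1-dA)$ with $B:=\langle(s-\theta)/R\rangle$; hence $\operatorname{sign}s'=\operatorname{sign}B$. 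A short computation yields the two identities $R=(\theta^2+ds)/\theta$ and $B=-\big(\langle\theta W\rangle-\langle\theta\rangle\langle W\rangle\big)$ with $W:=1/R=\theta/(\theta^2+ds)$, i.e.\ $B$ is minus the covariance of $\theta$ and $W(\theta)$.

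Next I would use that $W(\theta)$ is unimodal, increasing for $\theta<\sqrt{ds}$ and decreasing for $\theta>\sqrt{ds}$, together with the pointwise equivalence $\theta\gtrless\sqrt{ds}\iff m\gtrless d$. For $d<\inf m$ one then has $\theta>\sqrt{ds}$ everywhere, so $W(\theta(\cdot))$ is a decreasing function of $\theta$ and $B>0$; for $d>\sup m$ one has $\theta<\sqrt{ds}$ everywhere, $W$ increasing, and $B<0$. (As $d\to0^+$ one can also check $B\to\bar m\langle 1/m\rangle-1>0$ by Jensen, consistent with Proposition \ref{prop}(ii).) Thus any critical point lies in $(\inf m,\sup m)$, and the theorem reduces to proving a single, downward sign change of $B$; it suffices to show $B'(d_0)<0$ at every $d_0\in(\inf m,\sup m)$ with $B(d_0)=0$.

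Differentiating $B$ and using $s'=0$ and $\theta_d=(s-\theta)/R$ at a critical point, I obtain
\[
B'\big|_{\mathrm{crit}}=-2\langle(s-\theta)W^2\rangle-2\langle(s-\theta)^2W^3\rangle .
\]
The identity $1+(s-\theta)W=s(\theta+d)/(\theta^2+ds)>0$ collapses the bracket into one sign-definite-looking weight,
\[
\langle(s-\theta)W^2\rangle+\langle(s-\theta)^2W^3\rangle=s\Big\langle\frac{(s-\theta)\,\theta^2(\theta+d)}{(\theta^2+ds)^3}\Big\rangle ,
\]
so the whole statement comes down to $\langle(s-\theta)G(\theta)\rangle>0$ with $G(\theta)=\theta^2(\theta+d)/(\theta^2+ds)^3$, under the two constraints $\langle s-\theta\rangle=0$ and $\langle(s-\theta)W\rangle=0$. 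I would prove this by a multiplier argument: the constraints allow subtracting $\beta W$ from $G$ for free, so it is enough to choose $\beta$ so that $\operatorname{sign}\big(G-\beta W\big)=\operatorname{sign}(s-\theta)$ on the range $[\theta_-,\theta_+]$ of $\theta$. Clearing denominators, this is the sign condition $\operatorname{sign}P(\theta)=\operatorname{sign}(s-\theta)$ for the explicit polynomial $P(\theta)=\theta(\theta+d)-\beta(\theta^2+ds)^2$; the choice $\beta=1/\big(s(s+d)\big)$ gives $P(s)=0$ and $P'(s)=d-2s$.

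The main obstacle is exactly this sign condition, and it is where the golden ratio enters. First, $P'(s)<0$ requires $d<2s$: at an interior critical point $d<\sup m$, while $s=\bar\theta>\bar m\ge\inf m$, so the hypothesis $\sup m/\inf m<(\sqrt5+1)/2<2$ already forces $d<\sup m<2\inf m\le 2s$. Second, the sign pattern can fail only if the oscillation interval $[\theta_-,\theta_+]$ reaches the stray positive roots of $P$; the same ratio bound controls $\theta_+/\theta_-$ through the increasing map $\theta=\phi(m)$, and evaluating $P$ at the endpoints via $\theta_\pm^2=(m_\pm-d)\theta_\pm+ds$ converts the endpoint inequalities $P(\theta_-)\ge0\ge P(\theta_+)$ into an algebraic condition on $m_-,m_+,d,s$ whose worst case is precisely $m_+/m_-=(\sqrt5+1)/2$. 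Verifying this endpoint/root bookkeeping — that the golden-ratio bound is exactly what keeps $P$ one-signed off $\theta=s$ on the range — is the delicate computational heart; the rest is the soft reduction above. Assembling everything, $B>0$ for small $d$, $B<0$ for large $d$, and $B$ crosses zero only downward, so there is a unique $L\in(\inf m,\sup m)$ with $\bar\theta$ non-decreasing on $(0,L]$ and non-increasing on $[L,\infty)$.
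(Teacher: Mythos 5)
Your framework is sound and in fact coincides with the paper's own computation almost line by line: the implicit differentiation giving $\operatorname{sign}\bar\theta'=\operatorname{sign}B$ with $B=\langle(s-\theta)W\rangle$, the formula $B'|_{\mathrm{crit}}=-2\langle(s-\theta)W^2\rangle-2\langle(s-\theta)^2W^3\rangle$, the collapse via $1+(s-\theta)W=s(\theta+d)/(\theta^2+ds)$, and the multiplier $\beta=1/(s(s+d))$ (which is exactly the paper's subtraction of the constant $1/(d\bar\theta+\bar\theta^2)$ made legitimate by $\langle(s-\theta)W\rangle=0$ at a critical point) are all the same steps the authors take. The only structural differences are cosmetic: you argue that $B$ can only cross zero downward, while the paper assumes a second local maximum, extracts a local minimum $d_0$ with $\bar\theta''(d_0)\ge0$, and derives a contradiction; and you reprove the endpoint monotonicity ($B>0$ for $d<\inf_\Omega m$, $B<0$ for $d>\sup_\Omega m$) by a covariance/unimodality argument where the paper simply cites \cite{LLW}.

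The genuine gap is that you stop exactly at the step where the hypothesis (\ref{theorem-ratio}) has to be used: you declare the verification that $\operatorname{sign}P(\theta)=\operatorname{sign}(s-\theta)$ on the range of $\theta$ to be ``the delicate computational heart,'' sketch an endpoint-bookkeeping strategy, and never carry it out, so the golden ratio is never actually shown to do its job. Moreover, the route you propose is more complicated than necessary. Since $s(s+d)\,\theta(\theta+d)-(\theta^2+ds)^2$ factors as $(s-\theta)Q(\theta)$ with $Q(\theta)=\theta^3+s\theta^2+ds\theta-d^2s=\theta^3+s\left(\theta^2+d\theta-d^2\right)$, the required sign condition is simply $Q>0$ on the range of $\theta$, and $\theta^2+d\theta-d^2>0$ if and only if $\theta/d>(\sqrt5-1)/2$. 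At an interior critical point one has $d<\sup_\Omega m$ and $\theta(x)>\inf_\Omega m$ (the latter needs a one-line check from the steady-state equation), so the hypothesis $\sup_\Omega m/\inf_\Omega m<(\sqrt5+1)/2$ gives $\theta/d>\inf_\Omega m/\sup_\Omega m>(\sqrt5-1)/2$ pointwise and the integrand is sign-definite with no endpoint analysis at all; in particular there is no ``worst case exactly at the golden ratio'' to verify -- the argument only establishes sufficiency of the bound, not sharpness. Supplying this two-line factorization is what your write-up is missing.
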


In the examples constructed in \cite{LiangLou12} for the model (\ref{general single})  with random diffusion, the authors choose $m(x) =1 + \epsilon g(x)$ with $\displaystyle \int_{\Omega} g(x )dx =0$ and $\epsilon$ sufficiently small,
and show that the total population has at least two local maxima as diffusion rate varies. However, obviously $m(x) =1 + \epsilon g(x)$  satisfies (\ref{theorem-ratio}) when $\epsilon$ is sufficiently small. This observation demonstrates that, for certain distribution of resources, the total population, as a function of diffusion rate, could have essentially different  behaviors for the local model  (\ref{general single}) and the nonlocal model (\ref{single-simple}).

This paper is organized as follows. Theorem \ref{thm-existence} is   proved in Section 2. Section 3 is devoted to the proofs of Proposition \ref{prop}, Theorems  \ref{theorem-unbounded-rate} and \ref{theorem-unbounded-example}. The proof of  Theorem \ref{theorem-total} is included in Section 4. Some miscellaneous remarks are included at the end.

\section{Existence and uniqueness of positive steady state}
In this section, we establish  the existence and uniqueness of positive steady state to the  problem of (\ref{main single}) when $m(x) \in L^{\infty} (\Omega)$.

\begin{proof}[Proof of Theorem \ref{thm-existence}]
First, if   the problem (\ref{main single}) admits a positive steady state, denoted by $\theta$, in $L^{\infty}(\Omega)$, then it is easy to see that $\mu_0>0$ by choosing $\psi=\theta$.

The rest of the proof is devoted to proving the other direction.
Assume $M:=\|m\|_{L^{\infty}}$ and let $u$ be the solution of
\begin{equation}\label{single-M}
\begin{cases}
  u_t=d\mathcal {L} [u] (x,t)    +  u(x,t)   [m(x)-  u(x,t)]   &x\in \Omega,\ t>0,\\
  u(x,0)=M &x\in \Omega.
\end{cases}
\end{equation}
 Thus, $u$ is decreasing in $t$ and there exists $\theta^*\in L^\infty(\Omega)$ such that $u(x,t)\rightarrow \theta^*(x)$ pointwisely as $t\rightarrow\infty$.  Moreover, $\theta^*$ is a steady state of (\ref{single-M}).

Now we show that $\theta^*\not\equiv 0$. Suppose that it is not true, that is $u(x,t)\rightarrow 0$ pointwisely  as $t\rightarrow\infty$.
Since $\mu_0>0$, by the definition of $\mu_0$ we can choose $0\not\equiv\psi_0\in L^2$ such that
\begin{equation}\label{eigen2}
\int_{\Omega}\left(d\mathcal {L} [\psi_0] (x)\psi_0(x) +m(x)\psi_0^2(x)\right)dx\ge \frac{\mu_0}{2}\int_{\Omega} \psi_0^2 dx>0.
\end{equation}
Let $\psi_i:=\min\{\psi_0, i\}$, obviously $\psi_i\rightarrow\psi_0$ in $L^2(\Omega)$ as $i\rightarrow\infty$. Combined with (\ref{eigen2}), we can fix $i=i_0$ large enough, such that
$$
\int_{\Omega}\left(d\mathcal {L} [\psi_{i_0}] (x)\psi_{i_0}(x) +m(x)\psi_{i_0}^2(x)\right)dx\ge \frac{\mu_0}{4} \int_{\Omega} \psi_{i_0}^2 dx > 0.
$$
Set $\phi:=\varepsilon_{i_0}\psi_{i_0}$, with $\displaystyle\varepsilon_{i_0}= \frac{1}{i_0} \min\{M,\frac{\mu_0}{8}\}$. It is routine to verify that
\begin{equation}\label{eigne-ep}
\int_{\Omega}\left(d\mathcal {L} [\phi] (x)\phi(x) +m(x)\phi^2(x)\right)dx-\frac{2}{3} \int_{\Omega} \phi^3dx \ge [\frac{\mu_0}{4}-\varepsilon_{i_0} i_0]\int_{\Omega} \phi^2 dx> 0.
\end{equation}
Suppose that $v$ is the  solution of
$$
\begin{cases}
  v_t=d\mathcal {L} [v] (x,t) +(m-v)v   &x\in \Omega,t>0,\\
  v(x,0)=\phi   &x\in \Omega,
\end{cases}
$$
and define
$$
E[v](t):=\frac{1}{2}\int_{\Omega}\left( d\mathcal {L} [v] v + mv^2 \right) dx - \frac{1}{3} \int_{\Omega}  v^3 dx.
$$
By comparison principle $\phi\le M$ implies that $v\le u$. Thus, $v\rightarrow 0$ in pointwisely as $t\rightarrow\infty$, and furthermore
\begin{equation}\label{E to 0}
E[v](t)\rightarrow 0 ~{\rm as}~ t\rightarrow\infty.
\end{equation}
However, since $k(x,y)$ is symmetric, straightforward computation yields that
$$
  \frac{d}{dt}E[v](t) =\int_{\Omega} v_t^2 dx\ge 0.
$$
Together with (\ref{eigne-ep}),  one sees that $ E[v](t)$ is a increasing function with positive initial data, which contradicts to (\ref{E to 0}).

Hence $\theta^*(x)\ge 0$ is a nontrivial steady state of (\ref{main single}).
Furthermore, denote $A:=\{x\in\Omega\, |\, \theta^*(x)=0\}$.  Due to the assumption \textbf{(K)}, a contradiction can be derived easily by integrating both sides of the equation satisfied by $\theta^*$ in $A$ if $A$ has positive measure. This yields that $\theta^*>0$ a.e. in $\Omega$.

It remains to show the uniqueness  of positive steady state to the  problem of (\ref{main single}) in $L^{\infty}(\Omega)$.  Suppose that $\theta\in L^\infty(\Omega)$ is a positive steady state of (\ref{main single}), i.e. $\theta$ satisfies
$$
d\mathcal {L} [\theta] (x)    +  \theta(x)   [m(x)-  \theta(x)]=0,
$$
By multiplying both sides by $\theta^{p-1}$ and integrating over $\Omega$,
we have
\begin{eqnarray*}
&& \int_{\Omega} \theta^{p+1}(x)dx - \int_{\Omega} m(x) \theta^p(x) dx\\
&=& d \int_{\Omega} \theta^{p-1}(x) \left[\int_\Omega k(x,y)\theta(y)dy-a(x)\theta(x) \right]dx\\
&\leq &d \int_{\Omega} \theta^{p-1}(x) \left(\int_\Omega k(x,y)dy\right)^{p-1\over p}\left(\int_{\Omega}k(x,y)\theta^{p}(y)dy\right)^{1\over p}dx-d\int_{\Omega}a(x)\theta^p(x) dx\\
&\leq & d \left(\int_{\Omega} \int_\Omega k(x,y)dy\theta^p(x) dx\right)^{p-1\over p}\left(\int_{\Omega}\int_{\Omega}k(x,y)\theta^{p}(y)dy dx\right)^{1\over p}-d\int_{\Omega}a(x)\theta^p(x) dx\\
&=&d \int_{\Omega} \int_\Omega k(x,y)dy\theta^p(x) dx -d\int_{\Omega}a(x)\theta^p(x) dx\leq 0,
\end{eqnarray*}
since $k(x,y)$ satisfies the assumption \textbf{(K)} and either $a(x)=1$ or $\displaystyle a(x) = \int_{\Omega} k(y,x)dy$. Thus it is easy to see that
$$\|\theta\|_{L^{p+1}}\le \|m\|_{L^{p+1}},$$
which yields that
$$\|\theta\|_{L^{\infty}}\le \|m\|_{L^{\infty}}=M,$$
since $p$ is arbitrary.
Then thanks to (\ref{single-M}), it follows that $\theta(x)\le \theta^*(x)$.
Straightforward computation gives
\begin{align*}
  \int_{\Omega} (\theta^*-\theta)\theta\theta^*dx&=\int_{\Omega}  (m-\theta)\theta\theta^*dx-\int_{\Omega} (m-\theta^*)\theta^*\theta dx\\
  &=-d\int_{\Omega} \mathcal{L}[\theta]\theta^*dx+d\int_{\Omega} \mathcal{L}[\theta^*]\theta dx =0,
\end{align*}
which implies that $\theta\equiv \theta^*$.
The proof is complete.
\end{proof}

\section{Ratio between total population and resources}
This section is devoted to the proofs of Proposition \ref{prop},  Theorems  \ref{theorem-unbounded-rate} and \ref{theorem-unbounded-example}.
Thanks to Theorem \ref{thm-existence}, when $m(x)$ satisfies the condition \textbf{(M)}, the problem (\ref{main single-N}) always admits a unique positive steady state, denoted by $\theta_{d,m}$, i.e., $\theta_{d,m}$ satisfies
\begin{equation}\label{single-ss}
d\left( \int_{\Omega} k(x,y) \theta(y) dy - a(x) \theta (x)\right)   +  \theta(x)   [m(x)-  \theta(x)] =0\ \ \ x\in\Omega,
\end{equation}
where
$$
a(x) = \int_{\Omega} k(y,x) dy\leq 1.
$$

\begin{proof}[Proof of Proposition \ref{prop}]
It is routine to show that since the nonlocal operator is symmetric and $\theta_{d,m}$ is nonconstant,
\begin{equation}\label{pf-prop-int}
\int_{\Omega}\theta_{d,m}(x)\,dx - \int_{\Omega}m(x)\,dx ={d \over 2}\int_{\Omega}\int_{\Omega}k(x,y) \frac{\left(\theta_{d,m}(x)- \theta_{d,m}(y)\right)^2}{\theta_{d,m}(x)\theta_{d,m}(y)}dydx>0,
\end{equation}
i.e., (i) is proved.

According to (\ref{single-ss}), it is routine to show that   the solution $\theta_{d,m}(x)$ can be expressed as follows
\begin{eqnarray*}
\theta_{d,m} (x) =
 {1\over 2} \left[m(x) -d a(x) + \sqrt{(m(x) -d a(x))^2 + 4 d \int_{\Omega} k(x,y) \theta_{d,m}(y) dy} \right],
\end{eqnarray*}
and $\displaystyle\|\theta_{d,m}\|_{L^{\infty}}\le \|m\|_{L^{\infty}}$.
Thus it is easy to see that
\begin{equation}\label{pf-prop-0}
\lim_{d\rightarrow 0^+} \theta_{d,m} (x) = m(x) \ \ \ \textrm{in}\ L^{\infty}(\Omega).
\end{equation}
Moreover, define
$$
T_m(d) = \int_{\Omega} \theta_{d,m}(x) dx.
$$
It follows from (\ref{pf-prop-int}) and (\ref{pf-prop-0})  that
$$
T_m'(0) = {1\over 2}\int_{\Omega}\int_{\Omega } \frac{  k(x,y) \left(m(x) - m (y)\right)^2}{m(x)m (y)} dy dx,
$$
and $T_m'(0)>0$  since $m(x)$ is nonconstant. (ii) is verified.

At the end, consider the case that $d \rightarrow +\infty.$ Due to (\ref{single-ss}), it is easy to see that
\begin{equation}\label{pf-prop-infty}
\lim_{d\rightarrow +\infty} \theta_{d,m}(x) - \frac{\int_{\Omega} k(x,y) \theta_{d,m}(y) dy}{a(x)} =0\ \ \textrm{in} \ L^{\infty}(\Omega).
\end{equation}
Since $\displaystyle\frac{\int_{\Omega} k(x,y) \theta_{d,m}(y) dy}{a(x)}$ is uniformly bounded and equi-continuous for $d\geq 1$, by Arzela-Ascoli Theorem, there exist $\ell(x)\in C(\bar\Omega)$ a sequence $\{d_j \}_{j\geq 1}$  with $d_j\rightarrow +\infty$ as $j\rightarrow \infty$ such that
$$
\lim_{j\rightarrow +\infty}   \frac{\int_{\Omega} k(x,y) \theta_{d_j,m}(y) dy}{a(x)} =\ell(x)\ \ \textrm{in} \ L^{\infty}(\Omega),
$$
which, together with (\ref{pf-prop-infty}), implies that
\begin{equation}\label{pf-prop-ell}
\lim_{j\rightarrow +\infty}    \theta_{d_j,m}(x)   =\ell(x)\ \ \textrm{in} \ L^{\infty}(\Omega),
\end{equation}
and
\begin{equation}\label{pf-prop-constant}
\ell(x) - \frac{\int_{\Omega} k(x,y) \ell(y) dy}{a(x)} =0.
\end{equation}
Suppose that $\ell\in C(\bar\Omega)$ is nonconstant. Denote $B := \{ x\in\bar\Omega \ | \ \ell =\max_{\bar\Omega}\ell \}$. Since $B \neq \bar\Omega$, a contradiction can be derived at $\partial B \bigcap \Omega$. Hence, the problem (\ref{pf-prop-constant}) only has constant solutions.

Also notice that by  the problem (\ref{single-ss}), one has
$$
 \int_{\Omega}   \theta_{d,m}(x)   [m(x)-  \theta_{d,m}(x)] dx =0.
$$
Thus it follows from (\ref{pf-prop-ell}) that
$$
\ell = {1\over |\Omega|} \int_{\Omega} m(x) dx,
$$
which is a fixed number. Hence the subsequence convergence in (\ref{pf-prop-ell}) can be improved to
$$
\lim_{d\rightarrow +\infty}    \theta_{d,m}(x)   =\ell\ \ \textrm{in} \ L^{\infty}(\Omega).
$$
The proof of (iii) is complete.
\end{proof}

Now we study the supremum of $ E(m)$. First, we show that the order of the supremum of $ E(m)$ is at most $O(\sqrt{d})$ when the total resources are given.

\begin{proof}[Proof of Theorem \ref{theorem-unbounded-rate}]


Set
\begin{eqnarray*}
&&\Omega_1 = \{x\in \Omega \ | \ \theta_{d,m}(x)>K_1 d \},\  \textrm{where}\ K_1= 2 \|k \|_{L^{\infty}} |\Omega|, \\
&&\Omega_2 =  \{x\in \Omega \ | \ \theta_{d,m}(x)>K_2 d \},\  \textrm{where}\ K_2= \frac{4\left(\int_{\Omega} m(x) dx+ K_1 |\Omega|\right)\|k \|_{L^{\infty}}}{\min_{\bar\Omega} a(x)} + 2 \|k \|_{L^{\infty}} |\Omega|.
\end{eqnarray*}
First, we establish a rough estimate for $\displaystyle \int_{\Omega}\theta_{d,m}(x) dx $ as follows. For any $x\in \Omega_1$,
\begin{eqnarray}\label{pf-thm-Omega1-x}
\theta_{d,m}(x) &=& m(x) + \frac{d\left( \int_{\Omega} k(x,y) \theta_{d,m}(y) dy - a(x) \theta_{d,m} (x)\right)}{\theta_{d,m}(x)}\cr
&\leq  &  m(x) + \frac{d \|k \|_{L^{\infty}} \int_{\Omega}  \theta_{d,m}(x) dx }{K_1 d}\cr
&= &  m(x) + \frac{1 }{2 |\Omega|}\int_{\Omega}  \theta_{d,m}(x) dx,
\end{eqnarray}
which implies that
$$
\int_{\Omega_1} \theta_{d,m}(x) dx\leq  \int_{\Omega} m(x) dx + \frac{|\Omega_1| }{2|\Omega| }\int_{\Omega}  \theta_{d,m}(x) dx \leq \int_{\Omega} m(x) dx + \frac{1 }{2 }\int_{\Omega}  \theta_{d,m}(x) dx.
$$
Then
$$
\int_{\Omega} \theta_{d,m}(x) dx = \int_{\Omega_1} \theta_{d,m}(x) dx+ \int_{\Omega_1^c} \theta_{d,m}(x) dx\leq \int_{\Omega} m(x) dx + \frac{1 }{2 }\int_{\Omega}  \theta_{d,m}(x) dx + K_1 d|\Omega_1^c|.
$$
Hence for $d\geq 1$,
\begin{equation}\label{pf-thm-roughbound}
\int_{\Omega} \theta_{d,m}(x) dx  \leq 2 \int_{\Omega} m(x) dx+ 2 K_1 d|\Omega_1^c|\leq 2\left(\int_{\Omega} m(x) dx +K_1 |\Omega|\right)d.
\end{equation}

Next, we prepare an estimate for  $|\Omega_2|$ in term of $d$. Denote
$$
\tilde\Omega_2 =\left \{ x\in \Omega \ \Big|\ m(x) \geq {d\over 2} a(x) \right\}.
$$
Obviously,
$$
\int_{\Omega} m(x) dx \geq \int_{\tilde\Omega_2} m(x) dx \geq {d\over 2} \min_{\bar\Omega} a(x)  |\tilde\Omega_2|,
$$
which implies that
$$
|\tilde\Omega_2| \leq  {1\over d} {2\over \min_{\bar\Omega} a(x) }  \int_{\Omega} m(x) dx.
$$
We claim that $\Omega_2 \subseteq \tilde\Omega_2$. If the claim is true, then one has
\begin{equation}\label{pf-thm-Omega2}
|\Omega_2| \leq {1\over d} {2\over \min_{\bar\Omega} a(x) }  \int_{\Omega} m(x) dx.
\end{equation}
To prove this claim, fix any $x\in \tilde\Omega_2^c$, i.e., $\displaystyle m(x) <{d\over 2} a(x)$.  Based on the equation (\ref{single-ss}),
\begin{eqnarray*}
\theta_{d,m} (x) &=&
 {1\over 2} \left[m(x) -d a(x) + \sqrt{(m(x) -d a(x))^2 + 4 d \int_{\Omega} k(x,y) \theta_{d,m}(y) dy} \right]\\
&=&\frac{  2d \int_{\Omega} k(x,y) \theta_{d,m}(y) dy}{  -m(x) +d a(x) + \sqrt{(m(x) -d a(x))^2 + 4 d \int_{\Omega} k(x,y) \theta_{d,m}(y) dy}}\\
&\leq & \frac{  2d \int_{\Omega} k(x,y) \theta_{d,m}(y) dy}{\displaystyle d a(x)}\leq \frac{  2 \|k \|_{L^{\infty}} \int_{\Omega}  \theta_{d,m}(x) dx}{  \min_{\bar\Omega} a(x)}\\
&\leq & \frac{  4\left(\int_{\Omega} m(x) dx+ K_1 |\Omega|\right)\|k \|_{L^{\infty}}}{\min_{\bar\Omega} a(x)} d,
\end{eqnarray*}
where the last inequality is due to (\ref{pf-thm-roughbound}). Hence $\theta_{d,m}(x) < K_2 d$, i.e., $x\in \Omega_2^c$. The claim is proved and thus (\ref{pf-thm-Omega2}) is valid.

Now we are ready to improve the estimate for $\displaystyle \int_{\Omega} \theta_{d,m}(x) dx$. For $x\in\Omega_2$, since $\Omega_2 \subseteq \Omega_1$,  the estimate (\ref{pf-thm-Omega1-x}) still holds, i.e.,
$$
\theta_{d,m}(x) \leq m(x) + \frac{1 }{2 |\Omega|}\int_{\Omega}  \theta_{d,m}(x) dx.
$$
Then
\begin{eqnarray*}
\int_{\Omega_2} \theta_{d,m}(x) dx &\leq& \int_{\Omega} m(x) dx + \frac{|\Omega_2| }{2 |\Omega|}\int_{\Omega}  \theta_{d,m}(x) dx\\
 &\leq& \int_{\Omega} m(x) dx + \frac{1 }{2 }\int_{\Omega_2}  \theta_{d,m}(x) dx+ \frac{|\Omega_2| }{2 |\Omega|}\int_{\Omega_2^c}  \theta_{d,m}(x) dx,
\end{eqnarray*}
which yields that
\begin{equation}\label{pf-thm-Omega2-int}
\int_{\Omega_2} \theta_{d,m}(x) dx \leq 2\int_{\Omega} m(x) dx + \frac{|\Omega_2| }{ |\Omega|}\int_{\Omega_2^c}  \theta_{d,m}(x) dx.
\end{equation}
Moreover, we analyze the solution $\theta_{d,m}$ in $\Omega_2^c$. According to the equation (\ref{single-ss}), the estimates (\ref{pf-thm-Omega2}) and (\ref{pf-thm-Omega2-int}), one has
\begin{eqnarray*}
\int_{\Omega_2^c} \theta_{d,m}^2(x) dx  &=& \int_{\Omega_2^c}  m(x)\theta_{d,m}(x) dx + d\int_{\Omega_2^c}  \left( \int_{\Omega} k(x,y) \theta_{d,m}(y) dy - a(x) \theta_{d,m} (x)\right) dx\\
&\leq & K_2 d \int_{\Omega_2^c}  m(x) dx - d\int_{\Omega_2}  \left( \int_{\Omega} k(x,y) \theta_{d,m}(y) dy - a(x) \theta_{d,m} (x)\right) dx\\
&\leq & K_2 d\int_{\Omega} m(x) dx +  d \int_{\Omega_2} \theta_{d,m}(x) dx\\
&\leq & \left(K_2 d  + 2d + {2\over \min_{\bar\Omega} a(x) } \frac{1 }{ |\Omega|}\int_{\Omega_2^c}  \theta_{d,m}(x) dx\right)\int_{\Omega} m(x) dx\\
&\leq & (K_2 +2) d\int_{\Omega} m(x) dx + {2\over |\Omega|}\left({\int_{\Omega} m(x) dx\over \min_{\bar\Omega} a(x) }\right)^2 + {1\over 2}\int_{\Omega_2^c} \theta_{d,m}^2(x) dx .
\end{eqnarray*}
This indicates that for $d\geq 1$,
$$
\int_{\Omega_2^c} \theta_{d,m}^2(x) dx \leq 2 (K_2 +2) d\int_{\Omega} m(x) dx + {4 \over |\Omega|}\left({\int_{\Omega} m(x) dx\over \min_{\bar\Omega} a(x) }\right)^2 \leq K_3 d,
$$
where
$$
K_3 =  2 (K_2 +2) \int_{\Omega} m(x) dx  + {4 \over |\Omega|}\left({1\over \min_{\bar\Omega} a(x) }\right)^2 \left(\int_{\Omega} m(x) dx\right)^2.
$$
Therefore, together with (\ref{pf-thm-Omega2-int}), for $d\geq 1$
\begin{eqnarray*}
\int_{\Omega} \theta_{d,m}(x) dx &=& \int_{\Omega_2} \theta_{d,m}(x) dx +\int_{\Omega_2^c} \theta_{d,m}(x) dx\\
&\leq &  2\int_{\Omega} m(x) dx+ \frac{|\Omega_2| }{ |\Omega|}\int_{\Omega_2^c}  \theta_{d,m}(x) dx +\int_{\Omega_2^c} \theta_{d,m}(x) dx\\
&\leq &  2\int_{\Omega} m(x) dx + 2 \left( |\Omega_2^c| \int_{\Omega_2^c} \theta_{d,m}^2(x) dx\right)^{1\over 2}\\
&\leq &  2\left( \int_{\Omega} m(x) dx + \sqrt{K_3 |\Omega|}\right) \sqrt{d}.
\end{eqnarray*}
Set
$$C_0 = 2\left( \int_{\Omega} m(x) dx + \sqrt{K_3 |\Omega|}\right)
$$ The desired estimate follows.
\end{proof}

Next we construct examples to demonstrate that the order $O(\sqrt{d})$ is optimal under the prescribed total resources.

\begin{proof}[Proof of Theorem \ref{theorem-unbounded-example}]
W.l.o.g., always assume that $\epsilon  d \leq 1$. First of all, it is routine to show that
$$
\theta_{d,m_\epsilon}(x) \leq {a(x_0) \over \epsilon}\leq {1\over \epsilon}\ \ \textrm{in}\ \Omega,
$$
and
\begin{equation*}
\theta_{d,m_\epsilon} (x)=\begin{cases}
\displaystyle {1\over 2} \left[ -d a(x) + \sqrt{d^2 a^2(x) + 4 d \int_{\Omega} k(x,y) \theta_{d,m_\epsilon} (y) dy} \right] &x\in \Omega\setminus \Omega_{0,\epsilon},\\
\displaystyle  {1\over 2} \left[ {a(x_0)\over \epsilon} -da(x)  + \sqrt{\left( {a(x_0)\over \epsilon} -da(x) \right)^2  + 4 d \int_{\Omega} k(x,y) \theta_{d,m_\epsilon} (y) dy} \right]   &x\in \Omega_{0,\epsilon}.
\end{cases}
\end{equation*}

Moreover, thanks to Theorem \ref{theorem-unbounded-rate}, one sees that
\begin{equation}\label{pf-theta-d}
\lim_{d\rightarrow + \infty}\sup_{\epsilon>0 } \frac{ \int_{\Omega} \theta_{d,m_\epsilon}(x) dx}{ d} =0,
\end{equation}
and
\begin{equation}\label{pf-int-theta-d}
\lim_{d\rightarrow + \infty} \sup_{\epsilon>0 } \frac{ \int_{\Omega}k(x,y) \theta_{d,m_\epsilon}(y) dy}{ d} =0
\end{equation}
uniformly in $\Omega$.

For $\displaystyle x\in \Omega\setminus \Omega_{0,\epsilon}$,
\begin{eqnarray*}
\theta_{d,m_\epsilon} (x) &=&
 {1\over 2} \left[ -d a(x) + \sqrt{d^2 a^2(x) + 4 d \int_{\Omega} k(x,y) \theta_{d,m_\epsilon}(y) dy} \right]\\
 &= &   {d\over 2}a(x)\left[ - 1 + \displaystyle\sqrt{  1 + {4\over a^2(x)}\frac{\int_{\Omega}k(x,y) \theta_{d,m_\epsilon}(y) dy}{ d}} \right]\\
 &=&  \frac{\displaystyle\int_{\Omega}k(x,y) \theta_{d,m_\epsilon}(y) dy}{ a(x)} - (1+\xi)^{-3/2}a^{-3}(x){1\over d}\left(\int_{\Omega}k(x,y) \theta_{d,m_\epsilon}(y) dy \right)^2,
\end{eqnarray*}
where
$$
0<\xi (x ) \leq {4\over a^2(x)}\frac{ \int_{\Omega}k(x,y) \theta_{d,m_\epsilon}(y) dy}{ d}.
$$
This yields that
\begin{eqnarray*}
&& \int_{\Omega\setminus \Omega_{0,\epsilon}}(1+\xi)^{-3/2}a^{-2}(x){1\over d}\left(\int_{\Omega}k(x,y) \theta_{d,m_\epsilon}(y) dy \right)^2  dx\\
&=&  \int_{\Omega\setminus \Omega_{0,\epsilon}}\left( \int_{\Omega}k(x,y) \theta_{d,m_\epsilon}(y) dy\right)dx -\int_{\Omega\setminus \Omega_{0,\epsilon}} a(x) \theta_{d,m_\epsilon} (x) dx\\
&=&   \int_{\Omega\setminus \Omega_{0,\epsilon}} \left(\int_{\Omega}k(y,x) \theta_{d,m_\epsilon}(x) dx\right) dy -\int_{\Omega\setminus \Omega_{0,\epsilon}} a(x) \theta_{d,m_\epsilon} (x) dx\\
&= &   \int_{\Omega} \left(a(x ) - \int_{\Omega_{0,\epsilon}} k(y,x) dy \right) \theta_{d,m_\epsilon}(x) dx  -\int_{\Omega\setminus \Omega_{0,\epsilon}} a(x) \theta_{d,m_\epsilon} (x) dx \\
&=&  \int_{\Omega_{0,\epsilon}} a(x) \theta_{d,m_\epsilon} (x) dx -  \int_{\Omega_{0,\epsilon}} \left(\int_{\Omega} k(y,x) \theta_{d,m_\epsilon}(x) dx  \right)  dy\\
&= &  \int_{\Omega_{0,\epsilon}} {a(x) \over 2} \left[{a(x_0)\over \epsilon} -da(x) + \sqrt{\left({a(x_0)\over \epsilon} -da(x)\right)^2  + 4 d \int_{\Omega} k(x,y) \theta_{d,m_\epsilon} (y) dy} \right] dx \\
&& - d \int_{\Omega_{0,\epsilon}} \frac{\int_{\Omega} k(x,y) \theta_{d,m_\epsilon}(y) dy }{d}  dx.
\end{eqnarray*}
Thus, thanks to  (\ref{pf-int-theta-d}) and the assumption $\displaystyle \lim_{d\rightarrow +\infty}\epsilon d = \alpha \in [0,1)$, one has
\begin{equation}\label{pf-thm-limit}
\lim_{d\rightarrow +\infty} \int_{\Omega\setminus \Omega_{0,\epsilon}} (1+\xi)^{-3/2}a^{-2}(x){1\over d}\left(\int_{\Omega}k(x,y) \theta_{d,m_\epsilon}(y) dy \right)^2  dx=\omega_n ( 1-\alpha )a^2(x_0),
\end{equation}
where $\omega_n$ denotes the volume of the unit ball in $\mathbb R^n$.
Notice that $a(x)$ is strictly positive and continuous  in $\Omega$, and $\lim_{d\rightarrow +\infty}  \xi(x)=0$ uniformly in $\Omega$. Hence (\ref{pf-thm-limit}) indicates that there exists a constant $C>0$ such that for $d$ large,
\begin{equation}\label{pf-lim-(1-1/d)}
\int_{\Omega\setminus \Omega_{0,\epsilon}} {1\over d}\left(\int_{\Omega}k(x,y) \theta_{d,m_\epsilon}(y) dy \right)^2  dx \geq C.
\end{equation}

Hence under the assumption   $\displaystyle \lim_{d\rightarrow +\infty}\epsilon d = \alpha \in [0,1)$, for $d$ large, one has
$$
 Cd \leq  \int_{\Omega} \left(\int_{\Omega}k(x,y) \theta_{d,m_\epsilon}(y) dy \right)^2 dx\leq \| k\|^2_{L^{\infty}} |\Omega| \left(\int_{\Omega}  \theta_{d,m_\epsilon}(y) dy \right)^2.
$$
Therefore,
$$
 \int_{\Omega}  \theta_{d,m_\epsilon}(x) dx \geq \sqrt{{C \over 2 |\Omega|} } {1\over\| k\|_{L^{\infty}}}  \sqrt{d}.
$$
This completes the proof.
\end{proof}

\section{Effects of diffusion rate and source}
The purpose of this section is to investigate how the total population   depends on $d$
for  the model (\ref{single-simple}) with a type of simplified nonlocal dispersal operator.

\begin{proof}[Proof of Theorem \ref{theorem-total}]
Suppose that $\bar{\theta}(d)$ admits more than one local maxima as $d$ varies, then $\bar{\theta}(d)$ must have at least one local minimal point $d_0>0$, i.e., $\bar\theta^{\prime}(d_0)=0$ and $\bar\theta'' (d_0)\geq 0$.  In the rest of this proof,  we use the symbol  $^\prime$ to denote the derivative  in $d$.

First of all, based on the equation satisfied by $\theta_d$ as follows
\begin{equation*}
d\left( \bar \theta  - \theta (x)\right)   +  \theta(x)   [m(x)-  \theta(x)] =0\ \ \ x\in\Omega,
\end{equation*}
it is easy to derive that
\begin{equation}\label{pf-thm-expression}
\theta_d(x)={m(x)-d\over 2}+{1\over 2}
\sqrt{\left[m(x)-d\right]^2+4d\bar\theta_d(d)},
\end{equation}
which yields that
\begin{equation}\label{pf-thm-identity}
\sqrt{(m(x)-d)^2+4d\bar\theta_d(d)}=2\theta_d(x) -m(x)+d=\theta_d(x) +\frac{d\bar\theta_d}{\theta_d(x)}.
\mathcal{}\end{equation}
and
\begin{equation}\label{pf-thm-theta'}
\theta_d ' =\frac{\bar \theta_d-\theta_d(x) + d\bar\theta_d '}{2\theta_d(x) -m(x)+d}.
\end{equation}
Then (\ref{pf-thm-theta'}) implies that
\begin{eqnarray}\label{pf-thm-bar-theta'}
 \bar\theta_d^\prime &=& \left( 1-{1\over |\Omega|}\int_{\Omega} \frac{d}{2\theta_d(x)-m(x)+d} dx\right)^{-1}{1\over |\Omega|}\int_{\Omega} \frac{\bar\theta_d-\theta_d(x)}{2\theta_d(x)-m(x)+d} dx \cr
    &=& \left( \int_{\Omega} \frac{2\theta_d(x)-m(x)}{2\theta_d(x)-m(x)+d}dx \right)^{-1}\int_{\Omega}  \frac{\bar\theta_d-\theta_d(x)}{2\theta_d(x)-m(x)+d} dx.
\end{eqnarray}
We remark that thanks to (\ref{pf-thm-identity}),
$$
1-{1\over |\Omega|}\int_{\Omega} \frac{d}{2\theta_d(x)-m(x)+d} dx = 1 - {1\over |\Omega|}\int_{\Omega} d\left( \theta_d(x) +\frac{d\bar\theta_d}{\theta_d(x)} \right)^{-1} dx >0,
$$
thus $\bar\theta_d'$ is always well defined.

To derive the expression for $\bar\theta_d ''$, for clarity, set
$$
f: =\frac{\displaystyle\bar\theta_d-\theta_d(x)}{\displaystyle 2\theta_d(x)-m(x)+d},\ \
g: = \frac{2\theta_d(x)-m(x)}{2\theta_d(x)-m(x)+d}.
$$
Then
$$
\bar\theta_d ' = \frac{\displaystyle \int_{\Omega} f dx}{\displaystyle \int_{\Omega} g dx},\ \
\bar\theta_d ''  =\frac{\displaystyle\int_{\Omega} f^\prime dx \int_{\Omega} g dx -\int_{\Omega} f dx \int_{\Omega} g^\prime dx}{\displaystyle \left(\int_{\Omega} g dx \right)^2}.
$$
In particular, according to the assumption that  $\bar\theta_d^{\prime}(d_0)=0$ and $\bar\theta_d'' (d_0)\geq 0$, one has
$$
\int_{\Omega} f dx\Big |_{d=d_0} = \int_{\Omega} \frac{\bar\theta_d-\theta_d(x)}{2\theta_d(x)-m(x)+d} dx \Big |_{d=d_0} =0,
$$
and
$$
\bar\theta_d^{\prime\prime}(d_0)=\frac{\displaystyle\int_{\Omega}
f^\prime dx \Big|_{d=d_0} }{\displaystyle\int_{\Omega} g dx \Big|_{d=d_0}}\geq 0,
$$
i.e., $\displaystyle \int_{\Omega} f^\prime dx \Big|_{d=d_0}  \geq 0$.

Furthermore,  based on (\ref{pf-thm-identity}), (\ref{pf-thm-theta'}) and the assumption that  $\bar\theta_d^{\prime}(d_0)=0$, it is standard to compute as follows
\begin{eqnarray*}
 f^\prime  \big|_{d=d_0} &=& \frac{(\bar\theta_d-\theta_d)^\prime(2\theta_d-m+d)-(\bar\theta_d-\theta_d)
 (2\theta'_d +1)}{(2\theta_d -m+d)^2}\Big|_{d=d_0}\\
 &=& \frac{-\theta_d^\prime(2\theta_d-m+d)-2\theta'_d(\bar\theta_d-\theta_d) -(\bar\theta_d-\theta_d)}{(2\theta_d -m+d)^2}\Big|_{d=d_0}\\
 &=& \frac{-\theta_d^\prime(2\theta_d-m+d)-2\theta'_d(\bar\theta_d-\theta_d) -\theta_d^\prime(2\theta_d-m+d)}{(2\theta_d -m+d)^2}\Big|_{d=d_0}\\
  &=& \frac{-2\theta_d'}{(2\theta_d -m+d)^2} \left( \theta_d +\frac{d\bar\theta_d}{\theta_d} +\bar\theta_d-\theta_d \right) \Big|_{d=d_0}\\
 &= & -2\bar\theta_d \theta_d^\prime  \left(\theta_d +\frac{d\bar\theta_d}{\theta_d}\right)^{-2}\left( \frac{d}{\theta_d} +1 \right)\Big|_{d=d_0}.
\end{eqnarray*}
Thus
\begin{eqnarray*}
\int_{\Omega} f^\prime dx \Big|_{d=d_0} &=& -2 \bar\theta_d\int_{\Omega} \theta_d^\prime \frac{d\theta_d + \theta_d^2}{\left(d\bar\theta_d + \theta_d^2\right)^2}dx \Big|_{d=d_0}\\
&=& -2 \bar\theta_d\int_{\Omega} \theta_d^\prime \left(\frac{d\theta_d + \theta_d^2}{\left(d\bar\theta_d + \theta_d^2\right)^2}-\frac{1}{d\bar\theta_d + \bar\theta_d^2}  \right)dx \Big|_{d=d_0} \\
&=& -2 \bar\theta_d \int_{\Omega} \frac{\theta_d \left( \bar\theta_d -\theta_d \right)}{d\bar\theta_d + \theta_d^2}\frac{ \left( \bar\theta_d -\theta_d \right)\left(\theta_d^3 +\bar\theta_d \theta_d^2+ d\bar\theta_d\theta_d- d^2\bar\theta_d \right)}{\left(d\bar\theta_d + \theta_d^2\right)^2\left(d\bar\theta_d + \bar\theta_d^2\right)} dx  \Big|_{d=d_0}.
\end{eqnarray*}

Notice that according to  \cite[Theorem 1.1]{LLW}, $\bar\theta_d$ is strictly increasing when $d<{1\over 2}(\bar m+\inf_{\Omega}  m)$
and  strictly decreasing when $d>\sup_{\Omega}  m$. Hence $d_0 \in (\inf_{\Omega} m, \sup_{\Omega} m)$.
Moreover, by the equation satisfied by $\theta_d$, it is easy to show that $\theta_{d_0}(x) \in (\inf_{\Omega} m, \sup_{\Omega} m)$ in $\Omega$. The assumption (\ref{theorem-ratio}) guarantees that
$$
\frac{\theta_{d_0}(x)}{d_0}>  \frac{\inf_{\Omega} m}{\sup_{\Omega} m}>\frac{\sqrt{5}-1}{2}\ \ \textrm{in}\ \Omega,
$$
which implies that
$$
\theta_{d_0}^3 +\bar\theta_{d_0} \theta_{d_0}^2+ d_0\bar\theta_{d_0}\theta_{d_0}- d_0^2\bar\theta_{d_0} >
\bar\theta_{d_0}[\theta_{d_0}^2+d_0\theta_{d_0}-d_0^2]>0\ \ \textrm{in}\ \Omega.
$$
This contradicts to the assumption that $d_0$ is a local maximum point. Therefore, there
  exists $L>0$ such that $\bar{\theta}(d)$ is non-decreasing in
  $d$ when $d\le L$, non-increasing in $d$ when $d>L$ and the property that $\displaystyle L\in (\inf m, \sup m)$ easily follows from  \cite[Theorem 1.1]{LLW}.

The proof is complete.


\end{proof}

\section{Miscellaneous remarks}

Logistic equation, introduced by  Verhulst in 1838 \cite{Verhulst}, is one of the most fundamental models in population dynamics. In 1951, random diffusion was introduced to model dispersal
behavior of a population \cite{Skellam}. Since then, reaction-diffusion models, which incorporate dispersal strategies, growth rates and carrying capacities, provide a good framework for studying questions in ecology.  There are tremendous studies in this direction, see the books \cite{CC-book, Ni-book,OL-book}. 

It is known that the logistic model with random diffusion (\ref{general single}) and that  with nonlocal dispersal (\ref{main single}) share a lot of similarity in qualitative properties of solutions except for regularity.
However, in this paper,  for two specific issues related to total population, our results show serious discrepancies between  the local model (\ref{general single}) and the nonlocal one (\ref{main single}).

First, for the local model (\ref{general single}), in the one-dimensional case, the ratio of the total population  to  the total resources is always less than 3 \cite{BHL}. However, for the nonlocal model (\ref{main single-N}), the supremum of this ratio is always unbounded regardless of dimension of domains. Indeed, our results indicate that for certain distribution of resources, this ratio goes to infinity with order $\sqrt{d}$ as the dispersal rate $d\rightarrow\infty$.

The second issue is about the dependence of the total population on its dispersal rate. When the distribution of resources is a perturbation of a constant, the total population, as a function of $d$, in the local model (\ref{general single}) could have multiple maximum points \cite{LiangLou12}, while in the nonlocal model (\ref{single-simple}), which is a special case of the model (\ref{main single}), it admits exactly one maximum point.

Based on the biological and mathematical meanings of these issues discussed before,
these discrepancies reflect  essential differences in local and nonlocal dispersal strategies from some concrete and subtle aspects. Our exploration in this direction is quite preliminary and more related problems are waiting to be investigated. For example,  in the first issue, how to maximize total population under the limited total resources  in  nonlocal models? In the second issue, it is still unknown  whether it is possible for the total population to have  multiple maximum points in the model (\ref{single-simple}). Moreover, the results obtained  are about the model (\ref{single-simple}) with simplified nonlocal operators and barely anything is known for more general nonlocal operators.
We will return to these problems in a future paper.

\end{document}